\newtheorem{theorem}{Theorem}
\newtheorem{corollary}{Corollary}
\newtheorem{lemma}{Lemma}
\title{Largest polyomino with no four cells equally spaced on a straight line}
\author{
\textsc{Jan Kristian Haugland}\\
\texttt{admin@neutreeko.net}}
\begin{document}

\maketitle

\begin{abstract}
\noindent The maximal number of cells in a polyomino with no four cells equally spaced on a straight line is determined to be 142. This is based on several partial results, each of which can be verified with computer assistance. 
\end{abstract}

\section{Definitions}
A \textit{polyomino} is an object in the plane formed by joining one or more unit squares (called \textit{cells}) edge to edge. It can be viewed as a graph with the cells as vertices and an edge joining two vertices if the corresponding cells are adjacent.

A polyomino is called \textit{admissible} if no four cells (i.e., their centres) are equally spaced on a straight line. A \textit{path} is a polyomino that either consists of a single cell, or contains two cells of degree 1 (called \textit{endpoints}) while all the remaining cells have degree 2.

Suppose a subset of the cells of a polyomino $P$ form a path $Q$ with endpoints $A$ and $B$. If the graph distance between $A$ and $B$ in $P$ is equal to the graph diameter $d$ of $P$, and $Q$ consists of $d+1$ cells, then $Q$ is said to be a \textit{maximal} path in $P$. The \textit{radius} of $P$ with respect to $Q$ is then the minimal value of $r$ such that for any cell $C$ in $P$, there exists a cell $D$ in $Q$ for which the graph distance between $C$ and $D$ is at most $r$.

A \textit{loop} is a polyomino for which all the cells have degree 2.

\section{Introduction}
The objective of this paper is to outline a verification of the following result:

\begin{theorem}
	An admissible polyomino may have at most 142 cells.
\end{theorem}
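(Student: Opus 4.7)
The plan is to combine an explicit construction achieving $142$ cells with a structural argument funnelling every admissible polyomino into a bounded family of configurations that can be exhaustively checked by machine. The lower bound is exhibited separately by producing one admissible $142$-cell polyomino. For the upper bound, the natural dichotomy is whether $P$ contains a vertex of degree $\ne 2$ or not. If not, $P$ is a loop; admissibility forces loops to turn frequently enough that the perimeter is drastically limited, and a direct computer enumeration of admissible loops handles this case.

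Otherwise, let $Q$ be a maximal path in $P$ of length $d+1$, where $d$ is the graph diameter. Such a $Q$ exists because any shortest path between two cells at distance $d$ in $P$ is induced and hence forms a path subgraph. I would then prove two key lemmas: (i) admissibility forces $d$ to be at most some explicit constant $D$, since a geodesic in the lattice that traverses long stretches eventually contains four equally spaced cells; and (ii) the radius $r$ of $P$ with respect to $Q$ is also bounded by some explicit constant $R$, because cells in a deep branch attached to $Q$ must realign with a direction already represented among the cells of $Q$ and create a forbidden arithmetic progression.

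With $d$ and $r$ bounded, a two-stage computer search becomes feasible: enumerate admissible paths of at most $D+1$ cells up to the symmetries of the square lattice, then for each such $Q$ enumerate admissible polyominoes of radius at most $R$ having $Q$ as a maximal path. The maximum cell count over this enumeration is to be $142$, matching the construction. The main obstacle is the second step, particularly the $r$-bound: getting constants small enough for an exhaustive search, yet provable from admissibility alone, requires delicate combinatorial arguments about arithmetic progressions of lattice points. One must also verify that every admissible polyomino genuinely falls under the ``path plus fringe of bounded radius'' template being searched, a completeness check that is easy to overlook.
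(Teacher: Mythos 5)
Your outline matches the paper's strategy: exhibit a $142$-cell example, bound the graph diameter by the maximal length of an admissible path, bound the radius of the polyomino with respect to a maximal path, and close the remaining gap by a computer enumeration over maximal paths and their bounded fringes. The one refinement you would be forced to discover is that the radius bound (the paper gets $r \le 5$) is \emph{not} provable from admissibility alone --- the paper first shows that any admissible polyomino with at least $67$ cells contains no loop of length greater than $4$, and this loop-exclusion lemma is what makes both the radius bound and the diameter-preserving build-up from the maximal path go through.
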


It is straightforward to generate all admissible paths with a computer program, and this will serve as the basis of the verification. Suppose, temporarily, that we only considered a special type of paths in which we could traverse the cells from one endpoint to the other by always moving either East or North, say, from one cell to the next. Instead of the polyomino itself, we could view the path as a sequence on the symbols E, N (East, North). The requirement that no four cells are equally spaced on a straight line would then be equivalent to requiring that no three consecutive "blocks" of symbols are permutations of each other. Dekking \cite{Dekking:1979} has shown that such a sequence could be infinitely long if no \textit{four} consecutive blocks are permutations of each other, and mentions that the case with three blocks is easily checked to only have finite solutions. But in our more general case, the paths can have as many as 120 cells.

\section{Analysis}
The basic idea in verifying Theorem 1 is to go through the admissible paths (or a subset of them, as we shall see later), and for each one, either find the largest admissible polyominoes that contain it as a maximal path, or find an upper bound for their size.

As a polyomino is built one cell at a time from a maximal path, it can be useful to keep track of the graph diameter of the intermediate polyominoes. Therefore, we start with a result on the existence of loops in large admissible polyominoes.

\begin{lemma}
	An admissible polyomino with at least 67 cells can not contain any loop of length greater than 4.
\end{lemma}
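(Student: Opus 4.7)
The plan is to reduce the lemma to a finite computer search in two stages: first enumerate the admissible loops of length greater than $4$, then for each such loop verify that no admissible polyomino extending it reaches $67$ cells. A key structural remark is that induced cycles in the (infinite) grid graph have length $4$ or at least $8$; any attempted 6-cycle in the grid forces a chord. So every loop of length greater than $4$ has length at least $8$, the archetype being the $3\times 3$ square with its centre cell removed. Admissibility of the loop as a polyomino in its own right forces the further constraint that no four loop cells be equally spaced on a line, in particular excluding long straight boundary segments. I would use this, together with the requirement that non-consecutive loop cells not be grid-adjacent (otherwise the induced subgraph is not a cycle), to argue that all admissible loops of length greater than $4$ fit in a bounded rectangle, and then enumerate them with a computer up to the dihedral symmetries of the square lattice.

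Given the finite list of admissible loops $L$, I would run a branch-and-bound extension search for each seed: starting from the cells of $L$, repeatedly add a cell adjacent to the current polyomino, and prune the instant four equally spaced collinear cells appear. The loops themselves already generate many immediately forbidden positions. For the $3\times 3$-minus-centre loop, for instance, the three equally spaced cells in each of rows $0$ and $2$ and in columns $0$ and $2$ immediately rule out the eight positions $(-1,0), (3,0), (-1,2), (3,2), (0,-1), (0,3), (2,-1), (2,3)$. As the search proceeds, new three-in-a-line patterns proliferate and the branching factor shrinks rapidly. The lemma then reduces to checking, for each seed loop $L$, that the search terminates at admissible polyominoes of size at most $66$.

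The main obstacle is controlling the size of both searches. In the loop-enumeration stage one must rigorously bound the diameter of admissible loops: a priori, a sufficiently zigzagging loop might avoid immediate four-in-a-line patterns while growing without bound, so a careful structural argument that any admissible loop lives in a small bounding box is the key geometric step. In the extension stage, without aggressive pruning the search from an 8-cell (or longer) seed loop would explore an enormous tree; feasibility rests on the rapid propagation of forbidden-position constraints and on collapsing equivalent branches via the grid's dihedral symmetries. If these two searches can each be carried out exhaustively, the lemma follows.
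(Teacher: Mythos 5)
Your proposal matches the paper's own verification: the paper's proof of this lemma is exactly a computer enumeration of all admissible loops of length at least $8$ (Table 1), together with, for each one, the maximal number of cells that can be added while keeping the polyomino admissible, the maximum total being $66$. The bounding worry you flag in the loop-enumeration stage is easily dispatched without a separate geometric argument: deleting one cell from a loop leaves a path, so every admissible loop of length $n$ contains an admissible path of $n-1$ cells, and the finiteness (and bounded length) of admissible paths, which the paper generates exhaustively anyway, bounds the loop search.
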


\begin{proof}[Verification]
	Table 1 shows all admissible loops (up to isometry) of length greater than or equal to 8, together with the maximal number of cells that may be added (so that the resulting polyomino remains admissible), and the maximal total number of cells. Each loop is given by a set of directions for moving from one cell to the next around the loop, with E, N, W and S representing East, North, West and South respectively.
\end{proof}

\begin{sidewaystable}
	\begin{tabular}{|l|c|c|c|}
		\hline
		 & & Max. no. of & Max. total \\
		Loop & Length & extra cells & no. of cells \\
		\hline
		EENNWWSS & 8 & 58 & 66 \\
		EENENNWWSWSS & 12 & 14 & 26 \\
		EENEENNWNWWSSWSS & 16 & 7 & 23 \\
		EENENNWNWWSWSSES & 16 & 4 & 20 \\
		EENEENNWNWWSWWSSES & 18 & 6 & 24 \\
		EENEENNWWNWWSWSSES &18 & 4 & 22 \\
		EENEENNWNNWWSWWSSESS & 20 & 1 & 21 \\
		EENEENNWNNEENEENNWNNWWSWSWWNWWSWSSESESSWSSES & 44 & 8 & 52 \\
		EENEENNWNNEENEENNWNWWSWWNWNWWSSWSSESESSWSSES & 44 & 10 & 54 \\
		EENEENNWNNEENEENNWNNWWSWSWWNNWWSSWSSESESSWSSES & 46 & 6 & 52 \\
		EENEENNWNNEENEENNWNNWWSSWWNWNWWSSWSSESESSWSSES & 46 & 6 & 52 \\
		EENEESESEENENNWNNENENNWNWWSWWNWNWWSWSSESSWSWSSES & 48 & 16 & 64 \\
		EENEESESEENENNWNWNNENNWNWWSWWNWNWWSWSSESESSWSSES & 48 & 4 & 52 \\
		EENEENNWNWNNENNWNWWSWWNWNWWSWWSSESESSWSSESEENEESES & 50 & 8 & 58 \\
		EENEENNWWNNENENNWNWWSWWNWNWWSWSSESSWSWSSESEENEESES & 50 & 14 & 64 \\
		EENEENNWWNNENENNWNNWWSSWWNWNWWSWSSESSWSWSSESEENEESES & 52 & 12 & 64 \\
		EENEENNWWNNENENNWNWWSWWNWNWWSWWSSEESSWSWSSESEENEESES & 52 & 12 & 64 \\
		EENEENNWNNEENEENNWNWWSWWNWNWWSWWSSESSWWSWWSSESEENEESES & 54 & 8 & 62 \\
		EENEENNWWNNENENNWNNWWSSWWNWNWWSWWSSEESSWSWSSESEENEESES & 54 & 10 & 64 \\
		EENEENNWWNNENENNWNNWWSSWWNWNWWSWWSSEESSWSWSSESSEENNEESES & 56 & 8 & 64 \\
		\hline
	\end{tabular}
    \centering
	\caption{Admissible loops of length greater than or equal to 8, and the maximal number of extra cells}
\end{sidewaystable}

Unlike larger loops, a loop of length 4 does not have its graph diameter increased if a cell is removed. This leads to the following result.

\begin{corollary}
	We can build any admissible polyomino $P$ with a least 67 cells by adding one by one cell to a maximal path in $P$, without altering the graph diameter at any point.
\end{corollary}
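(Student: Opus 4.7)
The plan is to prove the corollary by reverse induction on $|P'|$: starting from $P$, I successively delete one cell of $P' \setminus Q$ while preserving (i) connectedness, (ii) the inclusion $Q \subseteq P'$, and (iii) graph diameter $d$. Reversing the deletion sequence yields the required build-up from $Q$ to $P$, keeping the diameter equal to $d$ throughout. It therefore suffices to prove that whenever $Q \subsetneq P' \subseteq P$ is a polyomino satisfying (i)-(iii), there exists $c \in P' \setminus Q$ with $P' \setminus \{c\}$ also satisfying (i)-(iii).

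Call a cell $c$ of $P'$ \emph{safe} if every pair of graph-neighbors of $c$ in $P'$ has a common graph-neighbor distinct from $c$; equivalently, $c$ has degree at most one in $P'$, or degree two in corner configuration with the fourth corner of the enclosing $2 \times 2$ square also in $P'$. A safe cell can be removed without disconnecting $P'$ or increasing any pairwise graph distance, since any shortest path through $c$ traverses a segment $x \to c \to y$ that reroutes through the common neighbor $z \neq c$ at the same length. The inductive step thus reduces to showing that $P' \setminus Q$ contains a safe cell.

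Here Lemma 1 enters in an essential way: as $P' \subseteq P$ and $|P| \geq 67$, $P'$ contains no loop of length greater than $4$, so the graph of $P'$ has no induced cycle of length greater than $4$ and $P'$ is simply connected. Under these constraints a degree-two corner-configuration cell $c$ whose fourth corner is absent from $P'$ cannot lie on any cycle: the only candidate cycles are formed by rounding the missing corner along the outer boundary of $P'$, and a direct geometric analysis shows these have length at least $8$ with no chords available, hence would be induced cycles forbidden by Lemma 1. Consequently every non-cut-vertex convex-corner cell of $P'$ is safe, and I would produce the needed safe cell in $P' \setminus Q$ either by locating a degree-one ``branch tip'' sticking out of $Q$ (always safe) or, in its absence, by extracting a non-cut-vertex convex corner of $P'$ that lies in $P' \setminus Q$.

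The main obstacle is ensuring that such a convex corner always exists inside $P' \setminus Q$: a simply connected lattice region has at least four convex corners, while $Q$ can supply at most two as endpoints and only a controlled number as interior L-turn cells (each of which must have degree exactly two in $P'$ with both neighbors in $Q$). Tightening this counting argument to guarantee that at least one convex corner remains in $P' \setminus Q$ in every configuration requires careful geometric bookkeeping, but once it is in place, the induction closes and the corollary follows.
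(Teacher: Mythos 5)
Your general strategy --- running the construction backwards, deleting one cell of $P' \setminus Q$ at a time while preserving connectedness, the inclusion $Q \subseteq P'$, and the diameter, with Lemma 1 used to forbid long induced cycles --- is exactly the argument the paper has in mind, and much of your groundwork is correct: deleting a cell that is ``safe'' in your sense is indeed harmless, the no-long-loop property is inherited by every $P' \subseteq P$, and your observation that a degree-two corner cell with missing diagonal would otherwise lie on an induced cycle of length at least $8$ (hence is a cut vertex) is a correct and genuinely useful application of Lemma 1. The problem is that the whole proof rests on the claim that $P' \setminus Q$ always contains a safe cell, and this is precisely the step you do not prove: you explicitly defer the ``careful geometric bookkeeping.'' The induction therefore never closes, and the proposal is incomplete at its only essential point.

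Worse, the counting argument you sketch for that step cannot be completed in the form you describe, because it never uses the one hypothesis that makes the claim true: that $Q$ is a \emph{maximal} path, i.e.\ a shortest path between its endpoints realizing the graph diameter. Drop that hypothesis and the claim is false. Let $Q$ contain the U-turn $(-1,0),(-1,1),(0,1),(1,1),(1,0)$ and let $P' = Q \cup \{c\}$ with $c=(0,0)$. Then $P'$ is simply connected and its only induced cycles are two $4$-cycles, yet $P' \setminus Q = \{c\}$ contains no safe cell ($c$ has the opposite neighbours $(-1,0)$ and $(1,0)$, whose only common neighbour is $c$) and contains no convex corner of $P'$ (the bottom side of $c$ is its only boundary side, so every convex corner of the boundary lies on a cell of $Q$). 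This configuration is excluded in the corollary's setting only because such a $Q$ is shortcut by $c$ and hence is not a geodesic; since your convex-corner count invokes nothing beyond simple connectivity and the absence of long loops, it would have to apply to this configuration too, so the count as stated cannot be right. Independently, the bookkeeping itself is off: a zigzag $Q$ supplies arbitrarily many convex corners, not ``at most two plus a controlled number,'' and the four corners guaranteed by the boundary polygon can also be absorbed by cut-vertex staircase cells of $P' \setminus Q$, which your argument must also rule out. Finally, your notion of safety --- preservation of \emph{all} pairwise distances --- is stronger than what the corollary needs, namely that no distance be pushed above $d$; it is not clear that the stronger claim holds even under the geodesic hypothesis, so a correct completion may in addition have to weaken the deletion criterion rather than only repair the counting.
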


\newpage 
\begin{lemma}
	If $P$ is an admissible polyomino that does not contain any loop of length greater than 4, and $Q$ is a maximal path in $P$, then the radius of $P$ with respect to $Q$ is at most 5.
\end{lemma}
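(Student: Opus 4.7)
The plan is a proof by contradiction followed by a finite case check, in the verification style of the preceding lemma. Suppose some cell $C \in P$ lies at graph distance at least $6$ from every cell of $Q$, and let $P_C = C_0, C_1, \ldots, C_r$ be a shortest path in $P$ from $C_0 = D \in Q$ to $C_r = C$, so that $r \geq 6$. Minimality of $P_C$ forces $C_1, \ldots, C_r \notin Q$, and any edge of $P$ joining two non-consecutive cells of $P_C$ would give a strictly shorter route from $C$ to $Q$; hence $P_C$ is an induced path in $P$.

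Next I would use the no-long-loop hypothesis to pin down how $P_C$ attaches to $Q$. An edge of $P$ from $C_i$ with $i \geq 2$ to a cell of $Q$ other than $D$ would again produce a shorter route to $Q$, so extra attachment edges can only occur at $C_1$. Any such edge closes a cycle whose length equals $2 + |DD'|_Q$, where $|DD'|_Q$ is the distance along $Q$; for this cycle to have length at most $4$ (the only kind allowed), $C_1$ must be part of a $2 \times 2$ block with $D$ and its two-step neighbour along $Q$. Together with admissibility, this pins the local picture of $Q \cup P_C$ very tightly: $P_C$ is an induced path of length at least $6$ attached to $Q$ at $D$ (possibly through one $2 \times 2$ block), and no four cells of the combined set are equally spaced on a straight line.

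The proof is then reduced to verifying that no such configuration exists. Since admissible paths have at most $120$ cells (by the remark following Theorem 1), both $|Q|$ and $|P_C|$ are bounded, so the search space is finite. The verification would proceed by enumerating admissible paths $Q$ (as in the programs underlying Lemma 1) and, for each such $Q$ and each attachment cell $D$, trying to grow a branch cell by cell under the admissibility and loop-length constraints; the search halts the first time a branch reaches length $6$, confirming that no such branch exists. The main obstacle is purely computational --- aggressive pruning (discarding a candidate as soon as it forces a four-term arithmetic progression with cells already placed) will be essential to keep the enumeration tractable. The sharpness of the bound $5$ would then be witnessed by exhibiting an admissible polyomino containing a cell at graph distance exactly $5$ from its maximal path.
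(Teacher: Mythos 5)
Your local analysis of the branch is fine: a shortest path $P_C$ from a hypothetical far cell $C$ back to $Q$ is indeed induced, meets $Q$ only at $D$, and any extra adjacency at $C_1$ is forced into a $2\times 2$ block by the no-long-loop hypothesis. The gap is in what you then propose to verify. The statement ``no admissible configuration consisting of a path $Q$ plus an induced length-$6$ branch exists'' is false, so the finite search you describe cannot terminate with the desired confirmation: take any admissible path with at least $13$ cells, call its first $7$ cells $Q$ and its last $7$ cells the branch attached at the shared cell $D$; every condition you impose holds, yet the object is admissible. The missing ingredient is the hypothesis that $Q$ is a \emph{maximal} path, i.e., that its endpoints $A,B$ realize the graph diameter $d$ of $P$ --- your argument never uses this, and without it the lemma is simply not true. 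Maximality is what forces the attachment region to be far from both ends of $Q$: for instance, following a geodesic from $C$ to $B$ (of length at most $d$) until it first meets $Q$ at a cell $E$ gives $d \geq d(C,E) + d(E,B) \geq 6 + \bigl(d - d(A,E)\bigr)$, hence $d(A,E) \geq 6$, and symmetrically for $A$. This is what produces the configuration the paper actually checks: three paths of graph diameter $6$ overlapping only in one common endpoint, a $19$-cell object of which exactly one admissible instance exists, and that instance contains a loop of length $12$, excluded by hypothesis.

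A secondary problem is tractability even if your target statement were true: enumerating all admissible paths (up to $120$ cells) times all attachment cells times all branches is vastly larger than the paper's search over a single $19$-cell shape, and your appeal to ``aggressive pruning'' does not substitute for the structural reduction. The last sentence about exhibiting a radius-$5$ witness addresses sharpness, which is not part of the claim.
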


\begin{proof}[Verification]
	There is only one admissible polyomino (up to isometry) that is the union of three paths of graph diameter 6 that only overlap in one common endpoint, shown here.
	\begin{figure}[h]
		\centering
		\includegraphics[width=0.25\linewidth]{fork.png}
%		\caption[The union of three paths of diameter 6]{The union of three paths of diameter 6}
%		\label{fig:fork}
	\end{figure}

	\noindent However, it contains a loop of length 12, and is not an actual counterexample.
\end{proof}

If $Q$ is an admissible path, suppose its set of cells is partitioned into one or more disjoint subsets
$$Q=\bigcup\limits_{i=1, ..., k} Q_i$$
It seems natural to restrict our attention to the cases in which each $Q_i$ is connected, although this is not strictly required. Let $f(i)$ denote the maximal number of cells that can be added to $Q$ by the following iterative steps, assuming that a cell can only be added to an admissible polyomino if the resulting polyomino is also admissible, and if the graph diameter is not altered. \\

Step 1: Add only cells that are adjacent to at least one cell in $Q_i$

Step $j \in \{ 2, 3, ... \}$: Add only cells that are adjacent to at least one

cell that was added in step $j-1$\\

\noindent It follows from Lemma 2 that five iterative steps is sufficient if the resulting polyomino does not contain any loop of length greater than 4. Ideally, we want to use $k=1$ and find $f(1)$, the exact number of cells that can be added, but this can be time consuming. For higher values of $k$, an upper bound for the number of cells that can be added is given by
$$f(1) + f(2) + ... + f(k)$$
which is often good enough if we are only interested in the global maximum. A combination of the two approaches is also possible: Using upper bounds, we can determine which paths are good \textit{candidates} for creating large admissible polyominoes, and then we can run a full analysis on those.

\newpage \section{Results}
We do not need to go through all paths. For example, it can be verified that if $Q$ contains 48 cells, and we take the first 16, the middle 16 and the last 16 cells as the $k=3$ subsets, then we have $f(1) \leq 9$, $f(2) \leq 8$ and $f(3) \leq 9$. By Lemma 2, this gives us valid upper bounds also for other paths partitioned into segments of 16 cells in a similar way (i.e., 9 at the ends and 8 everywhere in between), and we can cover all diameters from 47 to 90. It has been verified that for any graph diameter less than 106, we have an upper bound for the total number of cells that is less than 142.

For each value of the graph diameter from 106 and upwards, the exact maximal number of cells has been determined.

\begin{center}
	\centering
	\begin{tabular}{|c|c|c|}
		\hline
		Graph & No. of admissible & Maximal size of an \\
		diameter & paths (up to isometry) & admissible polyomino \\
		\hline
		119 & 6 & 138 \\
		118 & 30 & 138 \\
		117 & 55 & 140 \\
		116 & 75 & 141 \\
		115 & 117 & 142 \\
		114 & 144 & 142 \\
		113 & 187 & 142 \\
		112 & 221 & 141 \\
		111 & 266 & 140 \\
		110 & 332 & 138 \\
		109 & 478 & 136 \\
		108 & 679 & 134 \\
		107 & 963 & 133 \\
		106 & 1308 & 132 \\
		\hline
	\end{tabular}
%	\caption{Number of admissible paths and the maximal size of an admissible polyomino, for all diameters greater than or equal to 106}
\end{center}

\newpage All the admissible polyominoes with 142 cells can be generated by including exactly one cell of each colour other than black in the figure (provided, of course, that it remains connected), and admissible polyominoes with maximal size for graph diameters 108 through 112 can be obtained by "pruning" them.

\begin{figure}[h]
	\centering
	\includegraphics[width=0.86\linewidth]{142omino.png}
%	\caption{}
%	\label{fig:142omino}
\end{figure}

Likewise, the largest admissible polyominoes having the maximal graph diameter of 119 can be generated by including exactly one cell of each colour other than black in the next figure.

\begin{figure}[h]
	\centering
	\includegraphics[width=0.9\linewidth]{138omino.png}
%	\caption{}
%	\label{fig:138omino}
\end{figure}

\end{document}